\theoremstyle{plain}
\newtheorem{proposition}{Proposition}
\newtheorem*{propstar}{Proposition}
\newtheorem{lemma}{Lemma}
\newtheorem{theorem}{Theorem}
\newtheorem{example}{Example}
\newcommand{\tr}{\textsf{T}}
\renewcommand{\bar}[1]{\overline{#1}}
\renewcommand{\tilde}[1]{\widetilde{#1}}
\DeclareMathOperator{\rank}{rank}
\DeclareMathOperator{\support}{support}
\DeclareMathOperator{\trace}{trace}
\DeclareMathOperator{\linspan}{span}
\DeclareMathOperator{\range}{range}
\newcommand{\cC}{\mathcal C}
\newcommand{\cF}{\mathcal F}
\newcommand{\cN}{\mathcal N}
\newcommand{\cS}{\mathcal S}
\newcommand{\cX}{\mathcal X}
\newcommand{\cY}{\mathcal Y}
\newcommand{\cV}{\mathcal V}
\newcommand{\NN}{\mathbb{N}}
\newcommand{\RR}{\mathbb{R}}
\renewcommand{\hat}[1]{\widehat{#1}}
\def\iddots{\mathinner{\mkern1mu\raise\p@
\vbox{\kern7\p@\hbox{.}}\mkern2mu
\raise4\p@\hbox{.}\mkern2mu\raise7\p@\hbox{.}\mkern1mu}}
\title{Lower bounds on nonnegative rank\\ via nonnegative nuclear norms}
\author{Hamza Fawzi \and Pablo A. Parrilo\thanks{The authors are with
    the Laboratory for Information and Decision Systems, Department of
    Electrical Engineering and Computer Science, Massachusetts
    Institute of Technology, Cambridge, MA 02139. Email:
    \texttt{\{hfawzi,parrilo\}@mit.edu}. This research was funded in part by AFOSR FA9550-11-1-0305.}}
\renewcommand\footnotemark{}
\date{January 28th, 2015}
\begin{document}

\maketitle

\begin{abstract}
The nonnegative rank of an entrywise nonnegative matrix $A \in \RR^{m
  \times n}_+$ is the smallest integer $r$ such that $A$ can be
written as $A=UV$ where $U \in \RR^{m \times r}_+$ and $V \in \RR^{r
  \times n}_+$ are both nonnegative. The nonnegative rank arises in
different areas such as combinatorial optimization and communication
complexity. Computing this quantity is NP-hard in general and it is
thus important to find efficient bounding techniques especially in the
context of the aforementioned applications.

In this paper we propose a new lower bound on the nonnegative rank
which, unlike most existing lower bounds, does not solely rely on the
matrix sparsity pattern and applies to nonnegative matrices with
arbitrary support. The idea involves computing a certain nuclear norm with
nonnegativity constraints which allows to lower bound the nonnegative
rank, in the same way the standard nuclear norm gives lower bounds on
the standard rank. Our lower bound is expressed as the solution of a
copositive programming problem and can be relaxed to obtain
polynomial-time computable lower bounds using semidefinite
programming. We compare our lower bound with existing ones, and
we show examples of matrices where our lower bound performs better
than currently known ones.
\end{abstract}

\section{Introduction}

Given a nonnegative\footnote{Throughout the paper, a nonnegative matrix is a matrix whose entries are all nonnegative.} matrix $A \in \RR^{m \times n}_+$, the \emph{nonnegative rank} of $A$ is the smallest integer $r$ such that $A$ can be factorized as $A=UV$ where $U \in \RR^{m \times r}_+$ and $V \in \RR^{r \times n}_+$ are both nonnegative. The nonnegative rank of $A$ is denoted by $\rank_+(A)$ and it always satisfies:
\[ \rank(A) \leq \rank_+(A) \leq \min(n,m). \]

The nonnegative rank appears in different areas such as in combinatorial optimization \cite{yannakakis1991expressing} and communication complexity \cite{kushilevitz2006communication,lovasz1990communication,lee2009lower}. Indeed, in combinatorial optimization a well-known result by Yannakakis \cite{yannakakis1991expressing} shows that the nonnegative rank of a suitable matrix characterizes the smallest number of linear inequalities needed to represent a given polytope. This quantity is very important in practice since the complexity of interior-point algorithms for linear programming explicitly depends on the number of linear inequalities.
Another application of the nonnegative rank is in communication complexity where one is interested in the minimum number of bits that need to be exchanged between two parties in order to compute a binary function $f:\cX \times \cY \rightarrow \{0,1\}$, assuming that initially each party holds only one of the two arguments of the function. This quantity is known as the communication complexity of $f$ and is tightly related to the nonnegative rank of the $|\cX|\times |\cY|$ matrix $M_f$ associated to $f$ defined by $M_f(x,y) = f(x,y)$ \cite{lee2009lower,lovasz1990communication}.
Finally it was also recently observed \cite{zhang2012quantum,jain2013efficient} that the logarithm of the nonnegative rank of a matrix $A$ coincides with the minimum number of bits that need to be exchanged between two parties in order to sample from the bivariate probability distribution $p(x,y) = A_{x,y}$ represented by the matrix $A$ (assuming $A$ is normalized so that $\sum_{x,y} A_{x,y} = 1$).

\paragraph{Lower bounds on nonnegative rank} Unfortunately, the nonnegative rank is hard to compute in general unlike the standard rank: For example it was shown in \cite{vavasis2009complexity} that the problem of deciding whether $\rank_+(A) = \rank(A)$ is NP-hard in general (see also \cite{arora2012computing} for further hardness results). Researchers have therefore developed techniques to find lower and upper bounds for
this quantity. Existing lower bounds are not entirely satisfactory
though, since in general they depend only on the sparsity pattern of
the matrix $A$ and not on the actual values of the entries, and as a consequence
these bounds cannot be used when all the entries of $A$ are strictly positive. In fact most of the currently known lower bounds
are actually lower bounds on the \emph{rectangle covering number}
(also known as the \emph{Boolean rank}) which is a purely
combinatorial notion of rank that is itself a lower bound on the
nonnegative rank. The \emph{rectangle covering number} of a
nonnegative matrix $A \in \RR^{m \times n}_+$ is the smallest number
of rectangles needed to cover the nonzero entries of $A$. More
precisely it is the smallest integer $r$ such that there exist
$r$ rectangles $R_i = I_i \times J_i \subseteq \support(A)$ for
$i=1,\dots,r$ such that
\[ \support(A) = \bigcup_{i=1}^r R_i, \]
where $\support(A) = \{ (i,j) \in \{1,\dots,m\}\times \{1,\dots,n\} \; : \; A_{i,j} \neq 0
\}$ is the set of indices of the nonzero entries of $A$. It is easy to
see that the rectangle covering number is always smaller than or equal
to the nonnegative rank of $A$ (each nonnegative rank 1 term in a
decomposition of $A$ corresponds to a rectangle). Some of the
well-known lower bounds on the nonnegative rank such as the fooling
set method or the rectangle size method (see \cite[Section 1.3]{kushilevitz2006communication}) are in fact lower bounds on
the rectangle covering number and they only depend on the sparsity
pattern of $A$ and not on the specific values of its entries. Recently, a new lower bound was proposed in \cite{gillis2012geometric} that does not rely solely on the sparsity pattern; however the lower bound depends on a new quantity called the \emph{restricted nonnegative rank} which can be computed efficiently only when $\rank(A)\leq3$ but otherwise is NP-hard to compute in general. Also a non-combinatorial lower bound on the nonnegative rank known as the \emph{hyperplane separation} bound was proposed recently and used in the remarkable result of Rothvoss \cite{rothvoss2014matching} on the matching polytope. The lower bound we propose in this paper has a similar flavor as the hyperplane separation bound except that it uses the Frobenius norm instead of the entrywise infinity norm. Also one focus of the present paper is on computational approaches to compute the lower bound using sum-of-squares techniques and semidefinite programming. Finally, after the initial version of this paper was submitted, we extended some of the ideas presented here and we proposed in \cite{fawzi2014self} new lower bounds that are invariant under scaling and that are related to hyperplane separation bounds and combinatorial bounds.

\paragraph{Contribution} In this paper we present an efficiently computable lower bound on the nonnegative rank that does not rely exclusively on the sparsity pattern and that is applicable to matrices that are strictly positive. Before we present our result, recall that a \mbox{symmetric} matrix $M \in \RR^{n \times n}$ is said to be \emph{copositive} if $x^\tr M x \geq 0$ for all $x\in \RR^{n}_+$. Our result can be summarized in the following:

\medskip

\begin{theorem}
\label{thm:main}
Let $A \in \RR^{m \times n}_+$ be a nonnegative matrix. Let $\nu_+(A)$ be the optimal value of the following convex optimization program:
\begin{equation}
 \label{eq:nu+}
 \nu_+(A) = \max_{W \in \RR^{m \times n}} \; \left\{ \; \langle A, W \rangle \; : \; \begin{bmatrix} I & -W\\ -W^\tr & I \end{bmatrix} \text{ copositive} \; \right\}.
\end{equation}
Then we have
\begin{equation}
\label{eq:lb}
 \rank_+(A) \geq \left(\frac{\nu_+(A)}{\|A\|_F}\right)^2,
\end{equation}
where $\|A\|_F := \sqrt{\sum_{i,j} A_{i,j}^2}$ is the Frobenius norm of $A$.
\end{theorem}
Note that $\nu_+(A)$ is defined as the solution of a conic program over the cone of copositive matrices. Copositive programming is known to be NP-hard in general (see e.g., \cite{dur2010copositive}), but fortunately one can obtain good approximations using semidefinite programming.
For example one can obtain a lower bound to $\nu_+(A)$ by solving the following semidefinite program:
\begin{equation}
\label{eq:nu+0}
 \nu_+^{[0]}(A) = \max_{W \in \RR^{m \times n}} \; \left\{ \; \langle A, W \rangle \; : \; \begin{bmatrix} I & -W\\ -W^\tr & I \end{bmatrix} \in \cN^{n+m} + \cS^{n+m}_+ \; \right\}
\end{equation}
where $\cS^{n+m}_+$ denotes the cone of symmetric positive semidefinite matrices of size $n+m$, and $\cN^{n+m}$ denotes the cone of symmetric nonnegative matrices of size $n+m$. Since in general the sum of a nonnegative matrix and a positive semidefinite matrix is copositive, we immediately see that $\nu_+(A) \geq \nu_+^{[0]}(A)$ for any $A$, and thus this yields a polynomial-time computable lower bound to $\rank_+(A)$:
\[ \rank_+(A) \geq \left(\frac{\nu_+^{[0]}(A)}{\|A\|_F}\right)^2. \]
One can in fact obtain tighter estimates of $\nu_+(A)$ using
semidefinite programming by considering hierarchies of approximations of
the copositive cone, like e.g., the hierarchy developed in
\cite{parrilo2000structured}. This is discussed in more detail later
in the paper (Section~\ref{sec:copositive_hierarchy}).

Note from the definition of $\nu_+(A)$ (Equation \eqref{eq:nu+}) that $\nu_+(A)$ is convex in $A$ and is thus continuous on the interior of its domain. In Section \ref{sec:lbapproxnnrank} we show how the quantity $\nu_+(A)$ can in fact be used to obtain a lower bound on the nonnegative rank of any matrix $\tilde{A}$ that is $\epsilon$-close (in Frobenius norm) to $A$.

It is interesting to express the dual of the copositive program \eqref{eq:nu+}. The dual of the cone of copositive matrices is the cone of completely positive matrices. A symmetric matrix $M$ is said to be \emph{completely positive} if it admits a factorization $M=BB^\tr$ where $B$ is elementwise nonnegative. Note that a completely positive matrix is both nonnegative and positive semidefinite; however not every such matrix is necessarily completely positive (see e.g., \cite{berman2003completely} for an example and for more information on completely positive matrices). The dual of the copositive program \eqref{eq:nu+} defining $\nu_+(A)$ is the following completely positive program (both programs give the same optimal value by strong duality):
\[ \nu_+(A) = \min_{\substack{X \in \RR^{m \times m}\\ Y \in \RR^{n \times n}}} \; \left\{ \; \frac{1}{2}(\trace(X)+\trace(Y)) \; : \; \begin{bmatrix} X & A\\ A^\tr & Y \end{bmatrix} \text{ completely positive} \; \right\}. \]
We will revisit this completely positive program later in the paper in
Section~\ref{sec:nucnrm} when we discuss the relation between the
quantity $\nu_+(A)$ and the nuclear norm.

\paragraph{Outline} The paper is organized as follows. In Section~\ref{sec:lb} we give the proof of
the lower bound of Theorem~\ref{thm:main} and we also outline a
connection between the quantity $\nu_+(A)$ and the nuclear norm of a
matrix. We then discuss computational issues and we see how to obtain
semidefinite programming approximations of the quantity $\nu_+(A)$. In
Section~\ref{sec:examples} we look at specific examples of matrices and we show that
our lower bound can be greater than the plain rank lower bound and the rectangle covering number. In general however our lower bound is uncomparable to the
existing lower bounds, i.e., it can be either greater or smaller.
Among the examples, we show that our lower bound
is exact for the slack matrix of the hypercube, thus giving another
proof that the extension complexity of the hypercube in $n$ dimensions
is equal to $2n$ (a combinatorial proof of this fact is given in
\cite{fiorini2013combinatorial}).

\paragraph{Notations} Throughout the paper $\cS^n$ denotes the vector space of real symmetric matrices of size $n$ and $\cS^n_+$ is the cone of positive semidefinite
matrices of size $n$.
The cone of symmetric elementwise nonnegative matrices of size $n$ is denoted by $\cN^{n}$.
 If $X \in \cS^n$ we write $X \succeq 0$ to say that $X$ is positive semidefinite. Given
two matrices $X$ and $Y$, their inner product is defined as $\langle
X,Y \rangle = \trace(X^\tr Y) = \sum_{i,j} X_{i,j} Y_{i,j}$.

\section{The lower bound}
\label{sec:lb}

\subsection{Proof of the lower bound}
\label{sec:proof}

In this section we prove the lower bound of Theorem \ref{thm:main} in
a slightly more general form:

\begin{theorem}
\label{thm:main_gen}
Let $A \in \RR^{m \times n}_+$ be a nonnegative matrix. Let $P \in \RR^{m\times m}$ and $Q \in \RR^{n\times n}$ be nonnegative symmetric matrices with strictly positive diagonal entries. Let $\nu_+(A;P,Q)$ be the optimal value of the following copositive program:
\begin{equation}
 \label{eq:nu+PQ}
 \nu_+(A;P,Q) = \max_{W \in \RR^{m \times n}} \; \left\{ \; \langle A, W \rangle \; : \; \begin{bmatrix} P & -W\\ -W^\tr & Q \end{bmatrix} \text{ copositive} \; \right\}
\end{equation}
Then we have
\begin{equation}
\label{eq:lbPQ}
 \rank_+(A) \geq \left(\frac{\nu_+(A;P,Q)}{\sqrt{\trace(A^\tr P A Q)}}\right)^2.
\end{equation}
\end{theorem}

Note that Theorem \ref{thm:main} in the Introduction corresponds to
the special case where $P$ and $Q$ are the identity matrices. The more
general lower bound of Theorem \ref{thm:main_gen} has the advantage of
being invariant under diagonal scaling: namely if $\widetilde{A}=D_1 A
D_2$ is obtained from $A$ by positive diagonal scaling (where $D_1$
and $D_2$ are positive diagonal matrices), then the lower bound
produced by the previous theorem for both quantities $\rank_+(A)$ and
$\rank_+(\widetilde{A})$ (which are equal) will coincide, for adequate
choices of $P$ and $Q$. In fact it is easy to verify that if we choose
$P = D_1^{-2}$ and $Q=D_2^{-2}$ then
\[ \left(\frac{\nu_+(\widetilde{A};P,Q)}{\sqrt{\trace(\widetilde{A}^\tr P \widetilde{A} Q)}}\right)^2 = \left(\frac{\nu_+(A)}{\|A\|_F}\right)^2. \]
The dual of the copositive program defining $\nu_+(A;P,Q)$ is the following completely positive program where the matrices $P$ and $Q$ enter as weighting matrices in the objective function:
\[ \nu_+(A;P,Q) = \min_{\substack{X \in \RR^{m \times m}\\ Y \in \RR^{n \times n}}} \; \left\{ \; \frac{1}{2}(\trace(PX)+\trace(QY)) \; : \; \begin{bmatrix} X & A\\ A^\tr & Y \end{bmatrix} \text{ completely positive} \; \right\}. \]

\begin{proof}[Proof of Theorem \ref{thm:main_gen}]
We introduce the shorthand notations $\|x\|_P = \sqrt{x^\tr P x}$ when $x \in \RR^{m}_+$ and $\|y\|_Q = \sqrt{y^\tr Q y}$ when $y \in \RR^n_+$ which are well defined by the assumptions on $P$ and $Q$ (note however that $\|\cdot\|_P$ and $\|\cdot\|_Q$ are not necessarily norms in the usual sense).
Let $A$ be an $m \times n$ nonnegative matrix with nonnegative rank $r \geq 1$ and consider a factorization $A = UV = \sum_{i=1}^r u_i v_i^\tr$ where $u_i \in \RR^m_+$ are the columns of $U$ and $v_i^T \in \RR^n_+$ the rows of $V$. Clearly the vectors $u_i$ and $v_i$ are nonzero for all $i\in\{1,\dots,r\}$. By rescaling the $u_i$'s and $v_i$'s we can assume that $\|u_i\|_P = \|v_i\|_Q$ for all $i \in \{1,\dots,r\}$ (simply replace $u_i$ by $\tilde{u_i} = \gamma_i u_i$ and $v_i$ by $\tilde{v_i} = \gamma_i^{-1} v_i$ where $\gamma_i = \sqrt{\|v_i\|_Q / \|u_i\|_P}$). Observe that by the Cauchy-Schwarz inequality, we have:
\[ \frac{\sum_{i=1}^r \|u_i\|_P \|v_i\|_Q}{\sqrt{\sum_{i=1}^r \|u_i\|_P^2 \|v_i\|_Q^2}} \leq \sqrt{r} = \sqrt{\rank_+(A)} \]
We will now show separately that the numerator of the left-hand side above is lower bounded by the quantity $\nu_+(A;P,Q)$, and that the denominator is upper bounded by $\sqrt{\trace(A^\tr P A Q)}$:
\begin{itemize}
\item Numerator: If $W$ is such that $\left[\begin{smallmatrix} P & -W\\ -W^\tr & Q \end{smallmatrix}\right]$ is copositive then, since $u_i, v_i \geq 0$, we have:
\[ \begin{bmatrix} u_i\\ v_i \end{bmatrix}^\tr \begin{bmatrix} P & -W\\ -W^\tr & Q \end{bmatrix}\begin{bmatrix} u_i\\ v_i \end{bmatrix} \geq 0 \]
and hence
\[ u_i^\tr W v_i \leq \frac{1}{2}(\|u_i\|_P^2 + \|v_i\|_Q^2) = \|u_i\|_P \|v_i\|_Q \]
where we used the fact that $\|u_i\|_P = \|v_i\|_Q$.
Thus we get
\[ \langle A, W \rangle = \left\langle \sum_{i=1}^r u_i v_i^\tr, W \right\rangle = \sum_{i=1}^r u_i^\tr W v_i \leq \sum_{i=1}^r \|u_i\|_P \|v_i\|_Q. \]
Note that this is true for any $W$ such that
$\left[\begin{smallmatrix} P & -W\\ -W^\tr &
    Q \end{smallmatrix}\right]$ is copositive and thus we obtain
\[ \nu_+(A;P,Q) \leq \sum_{i=1}^r \|u_i\|_P \|v_i\|_Q. \]
\item Denominator: We now turn to finding an upper bound on $\sum_{i=1}^r  \|u_i\|_P^2 \|v_i\|_Q^2$. Observe that we have
\[
\begin{aligned} \trace(A^\tr P A Q) = \langle PA,AQ \rangle &= \sum_{1\leq i,j \leq r} \langle P u_i v_i^\tr, u_j v_j^\tr Q \rangle \\
               &= \sum_{i=1}^r \|u_i\|_P^2 \|v_i\|_Q^2 + \sum_{i \neq j} (u_i^\tr P u_j) (v_j^\tr Q v_i) \\
               &\geq \sum_{i=1}^r \|u_i\|_P^2 \|v_i\|_Q^2
\end{aligned}
\]
where in the last inequality we used the fact that $u_i^\tr P u_j \geq 0$ and $v_j^\tr Q v_j \geq 0$ which is true since $P$ and $Q$ are nonnegative.
\end{itemize}

Now if we combine the two points above we finally get the desired inequality
\[ \rank_+(A) = r \geq \left(\frac{\sum_{i=1}^r \|u_i\|_P \|v_i\|_Q}{\sqrt{\sum_{i=1}^r \|u_i\|_P^2 \|v_i\|_Q^2}}\right)^2 \geq \left(\frac{\nu_+(A;P,Q)}{\sqrt{\trace(A^\tr P A Q)}}\right)^2. \]
\end{proof}

\subsection{Lower bound on the approximate nonnegative rank}
\label{sec:lbapproxnnrank}

It is clear from the definition \eqref{eq:nu+} that the function \mbox{$A \in \RR^{m\times n}_+ \mapsto \nu_+(A)$} is convex and is thus continuous on the interior of its domain, unlike the nonnegative rank. A consequence of this is that the lower bound $(\nu_+(A)/\|A\|_F)^2$ will be small in general if $A$ is close to a matrix with small nonnegative rank. This continuity property of $\nu_+(A)$ can be used to obtain a lower bound on the nonnegative rank of any matrix $\tilde{A}$ that is close enough to $A$. Define the \emph{approximate nonnegative rank} of $A$, denoted $\rank_+^{\epsilon}(A)$, to be the smallest nonnegative rank among all nonnegative matrices that are $\epsilon$-close to $A$ in Frobenius norm:
\begin{equation}
\rank_+^{\epsilon}(A) = \min \left\{ \rank_+(\tilde{A}) \;\; : \;\; \tilde{A} \in \RR^{m \times n}_+ \text{ and } \|A - \tilde{A}\|_F \leq \epsilon \right\}.
\end{equation}
Approximate nonnegative factorizations and the approximate nonnegative rank have applications in lifts of polytopes \cite{gouveia2013approximate} as well as in information theory \cite{braun2013information}.

The following theorem shows that one can obtain a lower bound on $\rank_+^{\epsilon}(A)$ using the quantity $\nu_+(A)$:

\begin{theorem}
\label{thm:lbapproxnnrank}
Let $A \in \RR^{m \times n}_+$ be a nonnegative matrix and let $W$ be an optimal solution in the definition of $\nu_+(A)$ (cf. Equation~\eqref{eq:nu+}). Let $\epsilon$ be a positive constant with $\epsilon \leq \nu_+(A) / \|W\|_F$. Then we have:
\[ \rank_+^\epsilon(A) \geq \left(\frac{\nu_+(A) - \epsilon \|W\|_F}{\|A\|_F+\epsilon}\right)^2. \]
\end{theorem}
\begin{proof}
Let $\tilde{A}$ be any nonnegative matrix such that $\|A-\tilde{A}\|_F \leq \epsilon$. Since $W$ is a feasible point for the copositive program that defines $\nu_+(\tilde{A})$ we clearly have 
\[ \nu_+(\tilde{A}) \geq \langle \tilde{A}, W \rangle = \langle A, W \rangle + \langle \tilde{A}-A, W \rangle \geq \nu_+(A) - \epsilon \|W\|_F \]
Hence since $\nu_+(A) - \epsilon \|W\|_F \geq 0$ and $\|\tilde{A}\|_F \leq \|A\|_F + \epsilon$ we get:
\[ \rank_+(\tilde{A}) \geq \left(\frac{\nu_+(\tilde{A})}{\|\tilde{A}\|_F}\right)^2 \geq \left(\frac{\nu_+(A) - \epsilon \|W\|_F}{\|A\|_F+\epsilon}\right)^2. \]
Since this is valid for any $\tilde{A}$ that is $\epsilon$-close to $A$, we have
\[ \rank_+^\epsilon(A) \geq \left(\frac{\nu_+(A) - \epsilon \|W\|_F}{\|A\|_F+\epsilon}\right)^2. \]
\end{proof}
\subsection{Connection with nuclear norm}
\label{sec:nucnrm}

In this section we discuss the connection between the quantity $\nu_+(A)$ and nuclear norms of linear operators. If $A$ is an arbitrary (not necessarily nonnegative) matrix the \emph{nuclear norm} of $A$ is defined by  \cite{jameson1987summing}:
\begin{equation}
\label{eq:nu}
\nu(A) = \min \; \left\{ \; \sum_{i} \|u_i\|_2 \|v_i\|_2 \; : \; A = \sum_{i} u_i v_i^\tr \; \right\}.
\end{equation}
It can be shown that the quantity $\nu(A)$ above is equal to the sum of the singular values $\sigma_1(A)+\dots+\sigma_r(A)$ of $A$, and in fact this is the most commonly encountered definition of the nuclear norm. This latter characterization gives the following well-known lower bound on $\rank(A)$ combining $\nu(A)$ and $\|A\|_F$:
\[ \rank(A) \geq \left(\frac{\sigma_1(A)+\dots+\sigma_r(A)}{\sqrt{\sigma_1(A)^2+\dots+\sigma_r(A)^2}}\right)^2 = \left(\frac{\nu(A)}{\|A\|_F}\right)^2. \]

The characterization of nuclear norm given in Equation~\eqref{eq:nu} can be very naturally adapted to nonnegative factorizations of $A$ by restricting the vectors $u_i$ and $v_i$ in the decomposition of $A$ to be nonnegative. The new quantity that we obtain with this restriction is in fact nothing but the quantity $\nu_+(A)$ introduced earlier, as we show in the Theorem below. Observe that this quantity $\nu_+(A)$ is always greater than or equal than the standard nuclear norm $\nu(A)$.

\begin{theorem}
\label{thm:nucnrm}
Let $A \in \RR^{m \times n}_+$ be a nonnegative matrix. The following three quantities are equal to $\nu_+(A)$ as defined in Equation~\eqref{eq:nu+}:
\begin{enumerate}
\item[(i)] \hspace{0.25cm} $\displaystyle\min \; \left\{ \; \sum_{i} \|u_i\|_2 \|v_i\|_2 \; : \; A = \sum_{i} u_i v_i^\tr, \; u_i, v_i \geq 0 \; \right\}$
\item[(ii)] $\displaystyle\min_{\substack{X \in \cS^{m}\\ Y \in \cS^n}} \; \left\{ \; \frac{1}{2}(\trace(X)+\trace(Y)) \; : \; \begin{bmatrix} X & A\\ A^\tr & Y \end{bmatrix} \text{ completely positive} \; \right\}$
\item[(iii)] $\displaystyle\max_{W \in \RR^{m \times n}} \; \left\{ \; \langle A, W \rangle \; : \; \begin{bmatrix} I & -W\\ -W^\tr & I \end{bmatrix} \text{ copositive} \; \right\}$
\end{enumerate}
\end{theorem}

\begin{proof}
Note that the conic programs in $(ii)$ and $(iii)$ are dual of each other; furthermore program $(iii)$ is strictly feasible (simply take $W=0$) thus it follows from strong duality that $(ii)$ and $(iii)$ have the same optimal value. We thus have to show only that $(i)$ and $(ii)$ are equal.

We start by proving that $(ii) \leq (i)$: If $A = \sum_{i=1}^k u_i v_i^\tr$ where $u_i,v_i \geq 0$ and $u_i,v_i \neq 0$ for all $i\in\{1,\dots,k\}$, then if we let 
\[ X = \sum_{i=1}^k \frac{\|v_i\|_2}{\|u_i\|_2} u_i u_i^\tr \quad \text{and} \quad Y = \sum_{i=1}^k \frac{\|u_i\|_2}{\|v_i\|_2} v_i v_i^\tr \]
then
\[ \begin{bmatrix} X & A\\ A^\tr & Y \end{bmatrix} = \sum_{i=1}^k \|u_i\|_2 \|v_i\|_2 \begin{bmatrix}  u_i / \|u_i\|_2 \\ v_i / \|v_i\|_2 \end{bmatrix} \begin{bmatrix}  u_i / \|u_i\|_2 \\ v_i / \|v_i\|_2 \end{bmatrix}^\tr \]
and so $\left[ \begin{smallmatrix} X & A\\ A^\tr & Y \end{smallmatrix} \right]$ is completely positive since $u_i$ and $v_i$ are elementwise nonnegative. Furthermore we have
\[ \trace(X)+\trace(Y) = \sum_{i=1}^k \|u_i\|_2 \|v_i\|_2 + \sum_{i=1}^k \|u_i\|_2 \|v_i\|_2 = 2 \sum_{i=1}^k \|u_i\|_2 \|v_i\|_2. \]
Hence this shows that $(ii) \leq (i)$.

We now show that $(i) \leq (ii)$. Assume that $X$ and $Y$ are such that $\left[ \begin{smallmatrix} X & A\\ A^\tr & Y \end{smallmatrix} \right]$ is completely positive and consider a decomposition 
\[ \begin{bmatrix} X & A\\ A^\tr & Y \end{bmatrix} = \sum_{i=1}^k \begin{bmatrix} x_i\\ y_i\end{bmatrix} \begin{bmatrix} x_i\\ y_i\end{bmatrix}^\tr \]
where $x_i$ and $y_i$ are nonnegative. Then from this decomposition we have $A = \sum_{i=1}^k x_i y_i^\tr$ and
\[ \sum_{i=1}^k \|x_i\|_2 \|y_i\|_2 \leq \frac{1}{2} \left(\sum_{i=1}^k \|x_i\|_2^2 + \sum_{i=1}^k \|y_i\|_2^2\right) = \frac{1}{2} (\trace(X)+\trace(Y)). \]
This shows that $(i) \leq (ii)$ and completes the proof.
\end{proof}

For an arbitrary (not necessarily nonnegative) matrix $A$ the nuclear norm $\nu(A)$ can be computed as the optimal value of the following primal-dual pair of semidefinite programs:
\begin{equation}
\label{eq:svdpair}
\begin{minipage}{0.5\linewidth}
	\begin{array}[t]{ll}
	\underset{\substack{X \in \cS^{m}\\ Y \in \cS^{n}}}{\mbox{minimize}}   & \frac{1}{2} (\trace(X)+\trace(Y)) \\[0.3cm]
	\mbox{subject to} & \begin{bmatrix} X & A\\ A^\tr & Y \end{bmatrix} \in \cS^{m+n}_+
	\end{array}
\end{minipage}\hspace{0.5cm}\vrule\hspace{0.5cm}
\begin{minipage}{0.5\linewidth}
	\begin{array}[t]{ll}
	\underset{W \in \RR^{m \times n}}{\mbox{maximize}}   & \langle A, W \rangle  \\[0.3cm]
	\mbox{subject to} & \begin{bmatrix} I & -W\\ -W^\tr & I \end{bmatrix}  \in \cS^{m+n}_+
	\end{array}
\end{minipage}
\end{equation}
A solution of these semidefinite programs can be obtained from a singular value decomposition of $A$: namely if $A=U\Sigma V^\tr$ is a singular value decomposition of $A$, then $X=U\Sigma U^\tr$, $Y=V\Sigma V^\tr$ and $W=UV^\tr$ are optimal points of the semidefinite programs above, and the optimal value is $\trace(\Sigma)=\nu(A)$.

Note that the primal-dual pairs that define $\nu(A)$ and $\nu_+(A)$ are very similar, and the only differences are in the cones used: for $\nu(A)$ it is the self-dual cone of positive semidefinite matrices, whereas for $\nu_+(A)$ it is the dual pair of completely positive / copositive cones. Note also that these optimization problems are related to the convex programs that arise in \cite{doan2013finding}. In the following proposition we give simple sufficient conditions on the singular value decomposition of $A$ that allow to check if the two quantities $\nu_+(A)$ and $\nu(A)$ are actually equal. For the proposition recall that $\nu_+^{[0]}(A)$ is the first approximation of $\nu_+(A)$ defined in \eqref{eq:nu+0}, where the copositive cone is replaced by the cone $\cN^{n+m}+\cS^{n+m}_+$. Using duality the quantity $\nu_+^{[0]}(A)$ is also equal to the solution of the following minimization problem:
\begin{equation}
\label{eq:dualnu+0}
 \min_{X,Y} \; \left\{ \; \frac{1}{2}(\trace(X)+\trace(Y)) \; : \; \begin{bmatrix} X & A\\ A^\tr & Y \end{bmatrix} \in \cS^{2n}_+ \cap \cN^{2n} \; \right\}
\end{equation}
where $\cS^{2n}_+ \cap \cN^{2n}$ is the cone of doubly nonnegative matrices (i.e., the cone of matrices that are nonnegative and positive semidefinite).

\begin{proposition}
Let $A \in \RR^{m \times n}_+$ be a nonnegative matrix with a singular value decomposition $A = U\Sigma V^\tr$ and let $\nu(A) = \trace(\Sigma)$ be the nuclear norm of $A$. \\
(i) If $U\Sigma U^\tr$ and $V\Sigma V^\tr$ are nonnegative then $\nu_+^{[0]}(A) = \nu(A)$.\\
(ii) Also, if the matrix
\begin{equation}
 \label{eq:blockSVDmatrix}
 \begin{bmatrix}
U\Sigma U^\tr & A\\
A^\tr & V\Sigma V^\tr
\end{bmatrix} \in \cS^{n+m}
\end{equation}
is completely positive, then $\nu_+(A) = \nu(A)$.
\end{proposition}
\begin{proof}
To prove point (i) of the proposition note first that we always have the inequality $\nu_+^{[0]}(A) \geq \nu(A)$. Now if $U\Sigma U^\tr$ and $V\Sigma V^\tr$ are nonnegative then $X=U\Sigma U^\tr$ and $Y=V\Sigma V^\tr$ are feasible for \eqref{eq:dualnu+0} and achieve the value $\trace(\Sigma) = \nu(A)$. This shows that in this case $\nu_+^{[0]}(A) \leq \nu(A)$ and thus $\nu_+^{[0]}(A) = \nu_+(A)$. The proof of item (ii) is similar.
\end{proof}
To finish this section we investigate properties of the solution of the completely positive/copositive pair that defines $\nu_+(A)$ (cf. (ii) and (iii) in Theorem \ref{thm:nucnrm}).
An interesting question is to know whether the solution of this primal-dual pair possesses some interesting properties like the singular value decomposition. The next proposition shows a property which relates the optimal $W$ of the copositive program and a nonnegative decomposition $A=\sum_i u_i v_i^\tr$ satisfying $\sum_i \|u_i\|_2 \|v_i\|_2 = \nu_+(A)$.

\begin{proposition}
\label{prop:optimalW}
Let $A \in \RR^{m \times n}_+$ be a nonnegative matrix. Assume $A=\sum_i \lambda_i u_i v_i^\tr$ is a nonnegative decomposition of $A$ that satisfies
\[ \sum_i \lambda_i = \nu_+(A) \]
where $\lambda_i \geq 0$ and $u_i \in \RR^m_+, v_i \in \RR^n_+$ have unit norm $\|u_i\|_2 = \|v_i\|_2 = 1$. Let $W$ be an optimal point in the copositive program \eqref{eq:nu+}. Then we have for all $i$, 
\[ u_i = \Pi_+(Wv_i) \quad \text{and} \quad v_i = \Pi_+(W^\tr u_i) \]
where $\Pi_+(x) = \max(x,0)$ is the projection on the nonnegative orthant.
\end{proposition}

\noindent The proposition above can be seen as the analogue in the nonnegative world of the fact
that, for arbitrary matrices $A$, the optimal $W$ in the semidefinite
program \eqref{eq:svdpair} satisfies $u_i = Wv_i$ and $v_i = W^\tr
u_i$ where $u_i$ and $v_i$ are respectively the left and right singular
vectors of $A$.

Before proving the proposition we prove the following simple lemma:

\begin{lemma}
\label{lem:copositive_operator_norm}
If $W \in \RR^{m \times n}$ is such that $\begin{bmatrix} I & -W\\ -W^\tr & I \end{bmatrix}$ is copositive, then for any $v \in \RR^n_+$ we have \[ \|\Pi_+(Wv)\|_2 \leq \|v\|_2 \]
where $\Pi_+(x) = \max(x,0)$ is the projection on the nonnegative orthant.
\end{lemma}
\begin{proof}
Let $v \in \RR^n_+$ and call $u = \Pi_+(Wv)$. By the copositivity assumption we have, since $u$ and $v$ are nonnegative:
\[ \begin{bmatrix} u \\ v\end{bmatrix}^\tr \begin{bmatrix} I & -W\\ -W^\tr & I \end{bmatrix} \begin{bmatrix} u \\ v \end{bmatrix} \geq 0 \]
which can rewritten as:
\[ (u-Wv)^\tr (u-Wv) + v^\tr (I-W^\tr W) v \geq 0. \]
Since $u=\Pi_+(Wv)$ we get:
\[ \|\Pi_+(Wv) - Wv\|_2^2 + \|v\|_2^2 - \|Wv\|_2^2 \geq 0. \]
Note that $\|\Pi_+(Wv) - Wv\|_2^2 - \|Wv\|_2^2 = -\|\Pi_{+}(Wv)\|_2^2$ and thus we get the desired inequality
\[ \|\Pi_{+}(Wv)\|_2^2 \leq \|v\|_2^2. \]
\end{proof}

\noindent Using this lemma we now prove Proposition \ref{prop:optimalW}:

\begin{proof}[Proof of Proposition \ref{prop:optimalW}]
Let $A=\sum_i \lambda_i u_i v_i^\tr$ be a nonnegative decomposition of $A$ that satisfies $\sum_i \lambda_i = \nu_+(A)$ and let $W$ be an optimal point of the copositive program \eqref{eq:nu+}. Since $\begin{bmatrix} I & -W\\ -W^\tr & I \end{bmatrix}$ is copositive and $\|u_i\|_2 = \|v_i\|_2 = 1$, we have for each $i$, $u_i^\tr W v_i \leq 1$. Now since $W$ is optimal we have 
\[ \sum_i \lambda_i u_i^\tr W v_i = \langle A, W \rangle = \nu_+(A) = \sum_i \lambda_i, \]
and hence for each $i$ we have necessarily $u_i^\tr W v_i = 1$. Furthermore, we have the sequence of inequalities
\begin{equation}
\label{eq:inequalities}
 1 = u_i^\tr W v_i \leq u_i^\tr \Pi_+(Wv_i) \leq \|u_i\|_2 \|\Pi_+(Wv_i)\|_2 \leq \|u_i\|_2 \|v_i\|_2 = 1
\end{equation}
where in the first inequality we used that $u \geq 0$, then we used Cauchy-Schwarz inequality and for the third inequality we used Lemma \ref{lem:copositive_operator_norm}. Since the left-hand side and the right-hand side of \eqref{eq:inequalities} are equal this shows that all the intermediate inequalities are in fact equalities. In particular by the equality case in Cauchy-Schwarz we have that $\Pi_+(Wv_i) = \rho_i u_i$ for some constant $\rho_i$. But since $u_i^\tr \Pi_+(Wv_i) = 1$ and $\|u_i\|_2 = 1$ we get that $\rho_i = 1$. This shows finally that $u_i = \Pi_+(Wv_i)$. To prove that $v_i = \Pi_+(W^\tr u_i)$ we use the same line of inequalities as in \eqref{eq:inequalities} starting from the fact that $1 = v_i^\tr W^\tr u_i$.
\end{proof}

\subsection{Approximations and semidefinite programming lower bounds}
\label{sec:copositive_hierarchy}

In this section we describe how one can use semidefinite programming
to obtain polynomial-time computable lower bounds on $\nu_+(A)$, and
thus on $\rank_+(A)$. We outline here the hierarchy of approximations of
the copositive cone proposed by Parrilo in
\cite{parrilo2000structured} using
sums-of-squares techniques.

Recall that a symmetric matrix $M \in \cS^n$ is copositive if $x^\tr M x \geq 0$ for all $x \geq 0$. An equivalent way of formulating this condition is to say that the following polynomial of degree~4
\[ \sum_{1\leq i,j \leq n} M_{i,j} x_i^2 x_j^2 \]
is globally nonnegative. The cone $\cC$ of copositive matrices can thus be described as:
\[ \cC = \left\{ M \in \cS^n \; : \; \text{the polynomial } \sum_{1\leq i,j \leq n} M_{i,j}  x_i^2 x_j^2 \text{ is nonnegative} \right\}. \]
The $k$'th order inner approximation of $\cC$ proposed by Parrilo in \cite{parrilo2000structured} is defined as:
\begin{equation}
\label{eq:cCk}
 \cC^{[k]} = \left\{ M \in \cS^n \; : \; \left(\sum_{i=1}^n x_i^2\right)^{k} \left(\sum_{1\leq i,j \leq n} M_{i,j} x_i^2 x_j^2\right) \text{ is a sum of squares} \right\}. 
\end{equation}
It is clear that for any $k$ we have $\cC^{[k]} \subseteq \cC$ and also $\cC^{[k]} \subseteq \cC^{[k+1]}$, thus the sequence $(\cC^{[k]})_{k \in \NN}$ forms a sequence of nested inner approximations to the cone of copositive matrices; furthermore it is known via P{\'o}lya's theorem that this sequence converges to the copositive cone, cf. \cite{parrilo2000structured}. A crucial property of this hierarchy is that each cone $\cC^{[k]}$ can be represented using linear matrix inequalities, which means that optimizing a linear function over any of these cones is equivalent to a semidefinite program. Of course, the size of the semidefinite program gets larger as $k$ gets larger, and the size of the semidefinite program at the $k$'th level is  $\binom{n+k+1}{k+2}$. Note however that the semidefinite programs arising from this hierarchy can usually be simplified so that they can be solved more efficiently, cf. \cite{parrilo2000structured} and \cite[Section 8.1]{gatermann2004symmetry}.

An interesting feature concerning this hierarchy is that the approximation of order $k=0$ corresponds to 
\[ \cC^{[0]} = \cN^{n} + \cS^{n}_+, \]
i.e., $\cC^{[0]}$ is the set of matrices that can be written as the sum of a nonnegative matrix and a positive semidefinite matrix (this particular approximation was already mentioned in the introduction).

Using this hierarchy, we can now compute lower bounds to $\rank_+(A)$ using semidefinite programming:
\begin{theorem}
\label{thm:mainrelax}
Let $A \in \RR^{m \times n}_+$ be a nonnegative matrix. For $k \in \NN$, let $\nu_+^{[k]}(A)$ be the optimal value of the following semidefinite programming problem:
\begin{equation}
 \label{eq:nu+k}
 \nu_+^{[k]}(A) = \max \; \left\{ \; \langle A, W \rangle \; : \; \begin{bmatrix} I & -W\\ -W^\tr & I \end{bmatrix} \in \cC^{[k]} \; \right\}
\end{equation}
where the cone $\cC^{[k]}$ is defined in Equation~\eqref{eq:cCk}.
Then we have:
\begin{equation}
\label{eq:lbk}
 \rank_+(A) \geq \left(\frac{\nu_+^{[k]}(A)}{\|A\|_F}\right)^2.
\end{equation}
\end{theorem}

As $k$ gets larger the quantity $(\nu_+^{[k]}(A) / \|A\|_F)^2$ will
converge (from below) to $(\nu_+(A) / \|A\|_F)^2$ (where $\nu_+(A)$ is
defined by the copositive program \eqref{eq:nu+}) and this quantity
can be strictly smaller than $\rank_+(A)$ as can be seen for instance
in Example \ref{ex:derangement} given later in the paper. In general,
the quantity $(\nu_+^{[k]}(A) / \|A\|_F)^2$ may not converge to
$\rank_+(A)$, however it will always be a valid lower bound to
$\rank_+(A)$. In summary, we can write for any $k \geq 0$:
\[ \nu(A) \leq \nu_+^{[0]}(A) \leq \nu_+^{[k]}(A) \leq \nu_+(A) \leq \sqrt{\rank_+(A)} \| A \|_F \]
where $\nu(A)$ is the standard nuclear norm of $A$. The lower bounds \eqref{eq:lbk} are thus always greater than the lower bound that uses the standard nuclear norm (in fact they can also be greater than the standard rank lower bound as we show in the examples later).

On the webpage \url{http://www.mit.edu/~hfawzi} we provide a MATLAB script to compute the quantity $\nu_+^{[k]}(A)$ and the associated lower
bound on $\rank_+(A)$ for any nonnegative matrix $A$ and approximation
level $k \in \NN$. The script uses the software YALMIP and its
Sum-Of-Squares module in order to compute the quantity
$\nu_+^{[k]}(A)$ \cite{yalmip,yalmipSOS}. On a standard computer, the script allows to compute the lower bound of level $k=0$ for matrices up to size $\approx 50$.

Note that another hierarchy of inner approximations of the copositive cone has
been proposed by de Klerk and Pasechnik in
\cite{klerk2002approximation}. This hierarchy is based on linear
programming (instead of semidefinite programming) and it leads in
general to smaller programs that can be solved more
efficiently. However the convergence of the linear programming hierarchy is in general
much slower than the semidefinite programming hierarchy and one typically needs to take
very large values of $k$ (the hierarchy level) to obtain interesting
bounds.

\paragraph{Program size reduction when $A$ is symmetric}
If the nonnegative matrix $A$ is square and symmetric, the
semidefinite programs that define $\nu_+^{[k]}(A)$ can be simplified
to obtain smaller optimization problems that are easier to solve. For simplicity, we describe below in detail the case of $k=0$; the extensions to the general case can also be done. When $A$ is symmetric, the dual program defining $\nu_+^{[0]}(A)$ is:
\[ \text{minimize} \;\; \frac{1}{2} (\trace(X)+\trace(Y)) \;\; \text{subject to} \;\; \begin{bmatrix} X & A\\ A & Y \end{bmatrix} \in \cS^{2n}_+ \cap \cN^{2n} \]
where $\cS^{2n}_+ \cap \cN^{2n}$ is the cone of doubly nonnegative matrices. It is not difficult to see that one can always find an optimal solution of the SDP above where $X=Y$. Indeed if the matrix $\left[\begin{smallmatrix} X & A\\ A & Y \end{smallmatrix}\right]$ is feasible, then so is the matrix $\left[\begin{smallmatrix} Y & A\\ A & X \end{smallmatrix}\right]$ which also achieves the same objective function and thus by averaging we get a feasible point with the same matrices on the diagonal and with the same objective function. Therefore for symmetric $A$, the quantity $\nu_+^{[0]}(A)$ is given by the optimal value of:
\[ \text{minimize} \;\; \trace(X) \;\; \text{subject to} \;\; \begin{bmatrix} X & A\\ A & X \end{bmatrix} \in \cS^{2n}_+ \cap \cN^{2n}. \]
Now using the known fact that (see e.g., \cite[Fact 8.11.8]{bernstein2009matrix}) 
\[ \begin{bmatrix} X & A\\ A & X \end{bmatrix} \succeq 0 \;\; \Leftrightarrow \;\; X-A \succeq 0 \text{ and } X+A \succeq 0 \]
we can rewrite the program as:
\[ \text{minimize} \;\; \trace(X) \;\; \text{subject to} \;\; \begin{cases} X-A \succeq 0 & \\ X+A \succeq 0 & \\  X \text{ nonnegative} \end{cases} \]
This new program has two smaller positive semidefinite constraints, each of size $n$, and thus can be solved more efficiently than the previous definition which involved one large positive semidefinite constraint of size $2n$. Indeed most current interior-point solvers exploit block-diagonal structure in semidefinite programs.

One can also perform the corresponding symmetry reduction for the \emph{primal} program that defines $\nu_+^{[0]}(A)$, and this gives:
\begin{equation}
 \text{maximize} \;\; \langle A, W \rangle \;\; \text{subject to} \;\; \begin{cases} R-W \succeq 0 & \\ R+W \succeq 0 & \\ I-R \text{ nonnegative} \end{cases}
 \label{eq:reducedSDPdual}
\end{equation}
Observe that the constraint in the program above implies that $\left[\begin{smallmatrix} I & -W\\ -W^\tr & I\end{smallmatrix}\right] \in \cS^{2n}_+ + \cN^{2n}$ since:
\[ \begin{bmatrix} I & -W\\ -W & I\end{bmatrix} = \begin{bmatrix} I-R & 0\\ 0 & I-R\end{bmatrix} + \begin{bmatrix} R & -W\\ -W & R\end{bmatrix} \]
where in the right-hand side the first matrix is nonnegative and the second one is positive semidefinite.

\section{Examples}
\label{sec:examples}

In this section we apply our lower bound to some explicit matrices and we compare it to existing lower bounding techniques.

\begin{example}
\label{ex:cohenrothblum}
We start with an example where our lower bound exceeds the plain rank lower bound. Consider the following $4\times 4$ nonnegative matrix from \cite{cohen1993nonnegative}:
\[ 
\setlength{\arraycolsep}{3pt}
A = \left[\begin{array}{rrrr}
1 & 1 & 0 & 0\\
1 & 0 & 1 & 0\\
0 & 1 & 0 & 1\\
0 & 0 & 1 & 1
\end{array}\right]
\]
The rank of this matrix is 3 and its nonnegative rank is 4 as was noted in \cite{cohen1993nonnegative}. We can compute the exact value of $\nu_+^{[0]}(A)$ for this matrix $A$ and we get $\nu_+^{[0]}(A) = 4\sqrt{2} \approx 5.65$. We thus see that the lower bound is sharp for this matrix:
\[ 4 = \rank_+(A) \geq \left(\frac{\nu_+^{[0]}(A)}{\|A\|_F}\right)^2 = \left(\frac{4\sqrt{2}}{\sqrt{8}}\right)^2 = 4. \]
The optimal matrix $W$ in the semidefinite program\footnote{Note that the matrix considered in this example is symmetric, and so one could use the reduced semidefinite program \eqref{eq:reducedSDPdual} to simplify the computation of $\nu_+^{[0]}(A)$. However for simplicity and for illustration purposes, we used in this example the original formulation \eqref{eq:nu+0}.} \eqref{eq:nu+0} for which $\nu_+^{[0]}(A) = \langle A, W \rangle$ is given by:
\begin{equation}
\label{eq:Wsquare}
W = \frac{1}{\sqrt{2}}
\left[
\begin{array}{rrrr}
1 & 1 & -1 & -1\\
1 & -1 & 1 & -1\\
-1 & 1 & -1 & 1\\
-1 & -1 & 1 & 1
\end{array}
\right]
=
\frac{1}{\sqrt{2}}(2A - J)
\end{equation}
Observe that the matrix $W$ is obtained from $A$ by replacing the ones with $\frac{1}{\sqrt{2}}$ and the zeros with $-\frac{1}{\sqrt{2}}$.
The matrix $W$ is feasible for the semidefinite program \eqref{eq:nu+0} and one can check that we have the following decomposition of $\begin{bmatrix} I & -W\\ -W^\tr & I\end{bmatrix}$ into a nonnegative part and a positive semidefinite part:

{
\setlength{\arraycolsep}{2pt}
\small
\begin{equation}
\label{eq:P+Nsquare}
\left[
\begin{array}{c|c}
\begin{array}{rrrr} \phantom{0} & \phantom{0}  & \phantom{0}  & \phantom{0} \\  & \multicolumn{2}{c}{\multirow{2}{*}{$I$}} &  \\  &  &  &  \\ \phantom{0} &  \phantom{0} &  \phantom{0} & \phantom{0} \end{array} & -W\\ \hline
-W^\tr & \begin{array}{rrrr} \phantom{0} & \phantom{0}  & \phantom{0}  & \phantom{0} \\  & \multicolumn{2}{c}{\multirow{2}{*}{$I$}} &  \\  &  &  &  \\ \phantom{0} &  \phantom{0} &  \phantom{0} & \phantom{0} \end{array}
\end{array}
\right]
=
\underbrace{
\left[
\begin{array}{c|c}
\begin{array}{rrrr}0 & 0 & 0 & 1\\0 & 0 & 1 & 0\\0 & 1 & 0 & 0\\1 & 0 & 0 & 0\end{array} & 0\\ \hline
 0 & \begin{array}{rrrr}0 & 0 & 0 & 1\\0 & 0 & 1 & 0\\0 & 1 & 0 & 0\\1 & 0 & 0 & 0\end{array}
\end{array}
\right]
}_{\text{\normalsize nonnegative}}
+
\underbrace{
\left[
\begin{array}{c|c}
\begin{array}{rrrr}1 & 0 & 0 & -1\\0 & 1 & -1 & 0\\0 & -1 & 1 & 0\\-1 & 0 & 0 & 1\end{array} & -W\\ \hline
-W^\tr & \begin{array}{rrrr}1 & 0 & 0 & -1\\0 & 1 & -1 & 0\\0 & -1 & 1 & 0\\-1 & 0 & 0 & 1\end{array}
\end{array}
\right]
}_{\text{\normalsize positive semidefinite}}
\end{equation}
}
\end{example}

\begin{example}[Slack matrix of the hypercube]
\label{ex:slackhypercube}
 The $4\times 4$ matrix of the previous example is in fact the \emph{slack matrix} of the square $[0,1]^2$ in the plane. Recall that the slack matrix \cite{yannakakis1991expressing} of a polytope $P \subset \RR^n$ with $f$ facet inequalities $b_i - a_i^\tr x \geq 0$, $i=1,\dots,f$ and $v$ vertices $x_1,\dots,x_v \in \RR^n$ is the $f\times v$ matrix $S(P)$ given by:
\[ S(P)_{i,j} = b_i - a_i^\tr x_j \quad \forall i=1,\dots,f, \; j=1,\dots,v. \]
The matrix $S(P)$ is clearly nonnegative since the vertices $x_j$ belong to $P$ and satisfy the facet inequalities. Yannakakis showed in \cite{yannakakis1991expressing} that the nonnegative rank of $S(P)$ coincides with the smallest number of linear inequalities needed to represent the polytope\footnote{Note that the trivial representation of $P$ uses $f$ linear inequalities, where $f$ is the number of facets of $P$. However by introducing new variables (i.e., allowing projections) one can sometimes reduce dramatically the number of inequalities needed to represent $P$. For example the cross-polytope $P = \{x \in \RR^n : w^\tr x \leq 1\; \forall w \in \{-1,1\}^n\}$ has $2^n$ facets but can be represented using only $2n$ linear inequalities after introducing $n$ additional variables: $P = \{x \in \RR^n \; : \; \exists y \in \RR^n \; y \geq x, \; y \geq -x, \; 1^\tr y = 1\}$.} $P$. The minimal number of linear inequalities needed to represent $P$ is also known as the extension complexity of $P$; the theorem of Yannakakis therefore states that $\rank_+(S(P))$ is equal to the extension complexity of $P$.

The hypercube $[0,1]^n$ in $n$ dimensions has $2n$ facets and $2^n$ vertices. It is known that the extension complexity of the hypercube is equal to $2n$ and this was proved recently in \cite[Proposition 5.9]{fiorini2013combinatorial} using a combinatorial argument (in fact it was shown that any polytope in $\RR^n$ that is combinatorially equivalent to the hypercube has extension complexity $2n$). Note that the trivial lower bound obtained from the rank of the slack matrix in this case is $n+1$: in fact for any full-dimensional polytope $P \subset \RR^n$, the rank of the slack matrix of $P$ is equal to $n+1$, see e.g., \cite[Lemma 3.1]{gouveia2013polytopes}, and so the rank lower bound is in general not interesting in the context of slack matrices of polytopes. Also the lower bound of Goemans \cite{goemans2009smallest} here is $\log_2(2^n) = n$ which is not tight.

Below we use the lower bound on the nonnegative rank introduced in this paper to give another proof that the extension complexity of the hypercube in $n$ dimensions is $2n$.

\begin{proposition}
\label{prop:SlackHypercube}
Let $C_n = [0,1]^n$ be the hypercube in $n$ dimensions and let $S(C_n) \in \RR^{2n \times 2^n}$ be its slack matrix. Then
\[ \rank_+(S(C_n)) = \left(\frac{\nu_+^{[0]}(S(C_n))}{\|S(C_n)\|_F}\right)^2 = 2n. \]
\end{proposition}
\begin{proof}
The proof is given in Appendix \ref{sec:proofSlackHypercube}.
\end{proof}

\end{example}

\begin{example}[Matrix with strictly positive entries] 
\label{ex:stpos}
Consider the following matrix with strictly positive entries ($\epsilon > 0$):
\[ 
\setlength{\arraycolsep}{4pt}
A_{\epsilon} = \left[\begin{array}{cccc}
1+\epsilon & 1+\epsilon & \epsilon   & \epsilon\\
1+\epsilon & \epsilon   & 1+\epsilon & \epsilon\\
\epsilon   & 1+\epsilon & \epsilon   & 1+\epsilon\\
\epsilon   & \epsilon   & 1+\epsilon & 1+\epsilon
\end{array}\right]. \]
The matrix $A_{\epsilon}$ is obtained from the matrix of Example \ref{ex:cohenrothblum} by adding a constant $\epsilon > 0$ to each entry. The matrix $A_{\epsilon}$ has appeared before, e.g., in \cite[Equation 12]{gillis2012sparse} (under a slightly different form) and corresponds to the slack matrix for the pair of polytopes $[-1,1]^2$ and $[-1-2\epsilon,1+2\epsilon]^2$. It can be shown in this particular case, since $A_{\epsilon}$ is $4\times 4$ and $\rank A_{\epsilon} = 3$, that the nonnegative rank of $A_{\epsilon}$ is the smallest $r$ such that there is a polygon $P$ with $r$ vertices such that $[-1,1]^2 \subset P \subset [-1-2\epsilon,-1+2\epsilon]^2$, cf. \cite{gillis2012sparse} for more details.

Since $A_\epsilon$ is a small perturbation of the matrix $A$ of Example \ref{ex:cohenrothblum} one can use the approach\footnote{Since we are interested in obtaining a lower bound for the \emph{specific} perturbation $A_{\epsilon}$ of $A$, we can obtain a better bound than the one of Theorem \ref{thm:lbapproxnnrank} by normalizing by $\|A_{\epsilon}\|_F^2$ directly.} of Theorem \ref{thm:lbapproxnnrank} to get a lower bound on $\nu_+^{[0]}(A_{\epsilon})$. Let $W$ be the matrix defined in \eqref{eq:Wsquare}. Since $W$ is feasible for \eqref{eq:nu+0} we have $\nu_+^{[0]}(A_{\epsilon}) \geq \langle A_{\epsilon}, W \rangle$. It turns out in this example that the value of $\langle A_{\epsilon}, W \rangle$ does not depend on $\epsilon$ and is equal to $4\sqrt{2}$. This allows to obtain lower bounds on $\rank_+(A_{\epsilon})$ without solving any additional optimization problem. For example for $\epsilon = 0.1$ we get:
\[ \rank_+(A_{0.1}) \geq \frac{(4\sqrt{2})^2}{\|A_{0.1}\|_F^2} \approx 3.2 \]
which shows that $\rank_+(A_{0.1}) = 4$.
\end{example}

\begin{example}[Comparison with the Boolean rank lower bound]
\label{ex:rc}
We now show an example where the lower bound $(\nu_+^{[0]}(A)/\|A\|_F)^2$ is strictly greater than the rectangle covering lower bound (i.e., Boolean rank lower bound). Consider the $4 \times 4$ matrix
\[
\setlength{\arraycolsep}{3pt} 
A = \left[\begin{array}{rrrr}
0 & 1 & 1 & 1 \\
1 & 1 & 1 & 1 \\
1 & 1 & 0 & 0 \\
1 & 1 & 0 & 0 \end{array}\right]. \]
Note that the rectangle covering number of $A$ is 2 since $\support(A)$ can be covered with the two rectangles $\{1,2\}\times\{2,3,4\}$ and $\{2,3,4\}\times\{1,2\}$. If we compute the quantity $\nu_+^{[0]}(A)$ and the associated lower bound we get $\rank_+(A) \geq \lceil (\nu_+^{[0]}(A)/\|A\|_F)^2 \rceil = 3$ which is strictly greater than the rectangle covering number. In fact $\rank_+(A)$ is exactly equal to 3 since we have the factorization
{
\setlength{\arraycolsep}{3pt}
\[ 
 A = \left[\begin{array}{ccc}
1 & 1 & 0\\
1 & 1 & 1\\
0 & 1 & 1\\
0 & 1 & 1
\end{array}\right]
\left[\begin{array}{cccc}
0 & 0 & 1 & 1\\
0 & 1 & 0 & 0\\
1 & 0 & 0 & 0
\end{array}\right]. \]
}
\end{example}

\bigskip

The following simple proposition concerning diagonal matrices will be needed for the next example:
\begin{proposition}
\label{prop:diagonalMatrices}
If $A \in \RR^{n \times n}$ is a nonnegative diagonal matrix and $P$ is a diagonal matrix with strictly positive elements on the diagonal, then
\[ \nu_+(A;P,P) = \langle A, P \rangle = \sum_{i=1}^n A_{i,i} P_{i,i}. \]
In particular, for $P=I$ we get $\nu_+(A) = \trace(A)$.
\end{proposition}
\begin{proof}
We first show that $\nu_+(A;P,P) \leq \langle A, P \rangle$: Observe that if $W$ is such that $\begin{bmatrix} P & -W\\ -W & P\end{bmatrix}$ is copositive, then we must have $W_{i,i} \leq P_{i,i}$ for all $i\in\{1,\dots,n\}$ (indeed, take $e_i$ to be the $i$'th element of the canonical basis of $\RR^n$, then by copositivity we must have $2e_i^\tr P e_i - 2e_i^\tr W e_i \geq 0$ which gives $W_{i,i} \leq P_{i,i}$). Hence, since $A$ is diagonal we have $\langle A, W \rangle \leq \langle A, P \rangle$ for all feasible matrices $W$ and thus $\nu_+(A;P,P) \leq \langle A, P \rangle$.
Now if we take $W=P$, we easily see that $\begin{bmatrix} P & -W\\ -W & P\end{bmatrix}$ is copositive, and thus $\nu_+(A,P;P) \geq \langle A, P \rangle$.
Thus we conclude that $\nu_+(A;P,P) = \langle A, P \rangle$.
\end{proof}

\begin{example}[Matrix rescaling]
\label{ex:diagonal}
 The well-known lower bound on the rank
\[ \rank(A) \geq \left(\frac{\sigma_1(A)+\dots+\sigma_r(A)}{\sqrt{\sigma_1(A)^2+\dots+\sigma_r(A)^2}}\right)^2 \]
is sharp when all the singular values of $A$ are equal, but it is loose when the matrix is not well-conditioned. The same phenomenon is expected to happen also for the lower bound of Theorem \ref{thm:main} on the nonnegative rank, even if it is not clear how to define the notion of condition number in the context of nonnegative matrices. If one considers the following diagonal matrix where $\beta > 0$
\[ A = \begin{bmatrix}
\beta & 0 & \dots & 0\\
0 & 1 &  & \vdots\\
\vdots & &\ddots & 0\\
0 & \dots & 0 & 1
\end{bmatrix} \in \RR^{n \times n}_+, \]
then we have from Proposition \ref{prop:diagonalMatrices}, $\nu_+(A) = \trace(A) = \beta + n-1$, and thus for the lower bound we get:
\[ \rank_+(A) \geq \left(\frac{\nu_+(A)}{\|A\|_F}\right)^2 = \left(\frac{\beta+n-1}{\sqrt{\beta^2+n-1}}\right)^2. \]
If $\beta$ is large and grows with $n$, say for example $\beta = n$, the lower bound on the right-hand side is $\leq O(1)$ whereas $\rank_+(A) = n$. Note also that the standard rank of $A$ is equal to $n$.

To remedy this, one can use the more general lower bound of Theorem \ref{thm:main_gen} with weight matrices $P$ and $Q$ to obtain a better lower bound, and in fact a sharp one. Indeed, if we let
\[ P = Q = \begin{bmatrix}
\epsilon & 0 & \dots & 0\\
0 & 1 &  & \vdots\\
\vdots & &\ddots & 0\\
0 & \dots & 0 & 1
\end{bmatrix} \]
then we have $\nu_+(A;P,Q)^2 = (n-1+\beta\epsilon)^2$ and $\trace(A^\tr P A Q) = n-1+(\beta\epsilon)^2$. Hence for $\epsilon = 1/\beta$, the ratio $\left(\frac{\nu_+(A;P,Q)}{\sqrt{\trace(A^\tr P A Q)}}\right)^2$ is equal to $n$ which is equal to $\rank_+(A)$.
\end{example}

\begin{example}[Derangement matrix]
\label{ex:derangement} We now consider another example of a matrix that is not well-conditioned and where the lower bound of Theorem \ref{thm:main} is loose. Consider the derangement matrix
\[
\setlength{\arraycolsep}{3pt}  D_n = 
\left[\begin{array}{cccc}
0 & 1 & \dots & 1\\
1 & \ddots & \ddots & \vdots\\
\vdots & \ddots & \ddots & 1\\
1 &\dots & 1 & 0
\end{array}\right] \]
that has zeros on the diagonal and ones everywhere else. Then we have $\rank_+(D_n) = \rank(D_n) = n$, but we will show that 
\[ \left(\frac{\nu_+(D_n)}{\|D_n\|_F}\right)^2 \leq 4\]
for all $n$. Observe that the matrix $D_n$ is not well-conditioned since it has one singular value equal to $n-1$ while the other remaining singular values are all equal to 1.

To show that the lower bound of Theorem \ref{thm:main} is always bounded above by 4, note that the quantity $\nu_+(D_n)$ in this case is given by:
\begin{equation}
\label{eq:nu+Dn}
 \nu_+(D_n) = \max \; \left\{ \; \sum_{i \neq j} W_{i,j} \; : \; \begin{bmatrix} I & -W\\ -W^\tr & I \end{bmatrix} \text{ copositive} \; \right\} 
\end{equation}

Observe that by symmetry, one can restrict the matrix $W$ in the program above to have the form:
\footnote{Indeed, observe first that if $W$ is feasible for \eqref{eq:nu+Dn} then $W^\tr$ is also feasible and has the same objective value. Thus by averaging one can assume that $W$ is symmetric. Then note that for any feasible symmetric $W$ and any permutation $\sigma$, the new matrix $W'_{i,j} = W_{\sigma(i),\sigma(j)}$ is also feasible and has the same objective value as $W$. Hence again by averaging we can assume $W$ to be constant on the diagonal and constant on the off-diagonal.}
\[ 
\setlength{\arraycolsep}{3pt}
 W = \left[\begin{array}{cccc}
a & b & \dots & b\\
b & \ddots & \ddots & \vdots\\
\vdots & \ddots & \ddots & b \\
b & \dots & b & a
\end{array}\right] = bJ_n + (a-b) I_n \]
where $J_n$ is the $n\times n$ all ones matrix.
For $W$ of the form above we have for $u$ and $v$ arbitrary vectors in $\RR^n$, $u^\tr W v = (a-b)u^\tr v + b (1^\tr u)(1^\tr v)$ and hence the condition that $\begin{bmatrix} I_n & -W\\ -W^\tr & I_n \end{bmatrix}$ is copositive means that 
\[ \forall\;  u, v \in \RR^n_+,\;\;(a-b)u^\tr v + b (1^\tr u)(1^\tr v) \; \leq \; \frac{1}{2}(u^\tr u + v^\tr v) \]
Hence the quantity $\nu_+(D_n)$ can now be written as:
\[ \nu_+(D_n) =  \max_{a,b \in \RR} \; \left\{ \; (n^2-n) b \; : \; \forall u, v \in \RR^n_+,\; (a-b)u^\tr v + b (1^\tr u)(1^\tr v) \leq \frac{1}{2}(u^\tr u + v^\tr v) \; \right\} \]
Let us call $b_n$ the largest $b$ in the problem above, so that $\nu_+(D_n) = (n^2-n)b_n$. Assuming $n$ is even let 
\[ u_n = (\underbrace{1,\dots,1}_{n/2},0,\dots,0) \in \RR^n_+ \; \text{ and } \; v_n = (0,\dots,0,\underbrace{1,\dots,1}_{n/2}) \in \RR^n_+.\]
Then we have $u_n^\tr v_n = 0$ and the optimal $b_n$ must satisfy $b_n (1^\tr u_n)(1^\tr v_n) \leq \frac{1}{2}(u_n^\tr u_n + v_n^\tr v_n)$, which gives $b_n \frac{n^2}{4} \leq \frac{n}{2}$, i.e., $b_n \leq 2/n$. Hence $\nu_+(D_n) \leq (n^2-n) \cdot 2/n = 2(n-1)$ and 
\[ \left(\frac{\nu_+(D_n)}{\|D_n\|_F}\right)^2 \leq \frac{4(n-1)^2}{n^2 - n} = 4(1-1/n) \leq 4. \]
The case when $n$ is odd can be treated the same way and we also obtain the same upper bound of 4.
\end{example}

\paragraph{Summary of examples} In the examples above we have shown that our lower bounds (both the exact and the first relaxation) are uncomparable to most existing bounds on the nonnegative rank:
\begin{itemize}
\item Examples \ref{ex:cohenrothblum} and \ref{ex:derangement} show that the bound can be either larger or smaller than the standard rank.
\item Examples \ref{ex:rc} and \ref{ex:diagonal} show that the bound can be either larger or smaller than the rectangle covering number and the fooling set bound.
\end{itemize}
The following inequalities are however always satisfied for any nonnegative matrix $A$:
\[ \rank_+(A) \geq \left(\frac{\nu_+(A)}{\|A\|_F}\right)^2 \geq \left(\frac{\nu_+^{[0]}(A)}{\|A\|_F}\right)^2 \geq \left(\frac{\nu(A)}{\|A\|_F}\right)^2. \]

\section{Conclusion}

In this paper we have presented new lower bounds on the nonnegative
rank that can be computed using semidefinite programming. Unlike
many of the existing bounds, our lower bounds do not solely depend on
the sparsity pattern of the matrix and are applicable to matrices with
strictly positive entries.

An interesting question is to know whether the techniques
presented here can be strengthened to obtain sharper lower bounds. In
particular the results given here rely on the ratio of the $\ell_1$
and the $\ell_2$ norms, however it is known that the ratio of the
$\ell_1$ and the $\ell_p$ norms for $1 < p < 2$ can yield better lower
bounds. 

Another question left open in this paper is the issue of scaling. As we saw in Section \ref{sec:lb} and in the examples, one can choose scaling matrices $P$ and $Q$ to improve the bound. It is not clear however how to compute the optimal scaling $P$ and $Q$ that yields the best lower bound. In recent work \cite{fawzi2014self} we extend some of the ideas presented in this paper to obtain a new lower bound on the nonnegative rank which is invariant under diagonal scaling, and which also satisfies other interesting properties (e.g., subadditivity, etc.). In fact the technique we propose in \cite{fawzi2014self} applies to a large class of atomic cone ranks and can be used for example to obtain lower bounds on the cp-rank of completely positive matrices \cite{berman2003completely}.

Finally it is natural to ask whether the ideas presented here can be applied to obtain lower bounds on the \emph{positive semidefinite (psd) rank}, a quantity which was introduced recently in \cite{gouveia2011lifts} in the context of positive semidefinite lifts of polytopes. One main difficulty however is that the psd rank is not an ``atomic'' rank, unlike the nonnegative rank where the atoms correspond to nonnegative rank-one matrices. In fact it is this atomic property of the nonnegative rank which was crucial here to obtain the lower bounds in this paper.

\appendix

\section{Proof of Proposition \ref{prop:SlackHypercube}: Slack matrix of hypercube}
\label{sec:proofSlackHypercube}

In this appendix we prove Proposition \ref{prop:SlackHypercube}
concerning the nonnegative rank of the slack matrix of the
hypercube. We restate the proposition here for convenience:

\begin{propstar}
Let $C_n = [0,1]^n$ be the hypercube in $n$ dimensions and let $S(C_n) \in \RR^{2n \times 2^n}$ be its slack matrix. Then
\[ \rank_+(S(C_n)) = \left(\frac{\nu_+^{[0]}(S(C_n))}{\|S(C_n)\|_F}\right)^2 = 2n. \]
\end{propstar}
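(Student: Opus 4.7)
The plan is to sandwich $\rank_+(S(C_n))$ between two copies of $2n$. The upper bound $\rank_+(S(C_n)) \le 2n$ is immediate since the hypercube is cut out by $2n$ linear inequalities, and the slack matrix then admits the trivial row-by-row nonnegative factorization of size $2n$. By Theorem~\ref{thm:mainrelax} we have $\rank_+(S(C_n)) \ge (\nu_+^{[0]}(S(C_n))/\|S(C_n)\|_F)^2$, and since $\|S(C_n)\|_F^2 = n \cdot 2^n$ (each of the $2n$ facets is incident to $2^{n-1}$ vertices), it therefore suffices to exhibit a $W$ feasible for the SDP \eqref{eq:nu+0} with $\langle S(C_n),W\rangle = n \cdot 2^{(n+1)/2}$; the chain will then close and force equality throughout.

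Guided by the $n=2$ case worked out in Example~\ref{ex:cohenrothblum}, my ansatz is $W = \beta(2S(C_n) - J)$ with $\beta = 2^{(1-n)/2}$ and $J$ the all-ones $2n \times 2^n$ matrix, so $W$ has entries $\pm\beta$, equal to $+\beta$ exactly on the support of $S(C_n)$. A direct count of the $n \cdot 2^n$ facet-vertex incidences gives $\langle S(C_n), W\rangle = \beta \cdot n\cdot 2^n = n \cdot 2^{(n+1)/2}$, as required. Feasibility will be established through the decomposition $\left[\begin{smallmatrix} I & -W \\ -W^\tr & I \end{smallmatrix}\right] = N + M$, where $N = \left[\begin{smallmatrix} P & 0 \\ 0 & Q \end{smallmatrix}\right]$ with $P \in \{0,1\}^{2n \times 2n}$ the involution that swaps each facet with its opposite, $(-,i) \leftrightarrow (+,i)$, and $Q \in \{0,1\}^{2^n \times 2^n}$ the involution that swaps each vertex with its antipode, $v \leftrightarrow \mathbf{1}-v$. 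Then $N$ is manifestly entrywise nonnegative and $M = \left[\begin{smallmatrix} I-P & -W \\ -W^\tr & I-Q \end{smallmatrix}\right]$.

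The main obstacle is verifying $M \succeq 0$, which I would handle by exploiting the $\ZZ_2$-symmetries associated with $P$ and $Q$. Each of $\RR^{2n}$ and $\RR^{2^n}$ splits into symmetric ($+1$) and antisymmetric ($-1$) eigenspaces of the respective involution, on which $I-P$ and $I-Q$ act as $0$ and $2I$ respectively. The identities $W_{(+,i),v} = -W_{(-,i),v}$ and $W_{(\epsilon,i),\mathbf{1}-v} = -W_{(\epsilon,i),v}$ show that $W$ kills the symmetric subspace of $\RR^{2^n}$ and sends the antisymmetric subspace into the antisymmetric subspace of $\RR^{2n}$; consequently $(x,y)^\tr M(x,y)$ collapses to $2\|x_{\mathrm a}\|^2 + 2\|y_{\mathrm a}\|^2 - 2\, x_{\mathrm a}^\tr W y_{\mathrm a}$ on the antisymmetric components, and by AM-GM this is nonnegative for every $x_{\mathrm a}, y_{\mathrm a}$ iff $\|W\|_{\mathrm{op}} \le 2$. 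A final singular-value calculation using the Walsh characters $\widetilde W_i(v) = 2v_i - 1$ shows that $W(\widetilde W_i/2^{n/2}) = 2 \cdot (e_{(-,i)} - e_{(+,i)})/\sqrt 2$, so all $n$ nonzero singular values of $W$ equal exactly $2$; the bound $\|W\|_{\mathrm{op}} \le 2$ is thus tight, $M \succeq 0$, and the proof is complete.
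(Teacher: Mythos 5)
Your proof is correct, and its skeleton coincides with the paper's: the same upper bound $\rank_+(S(C_n))\le 2n$ from the row count, the same witness $W=2^{(1-n)/2}(2S(C_n)-J)$, the same value $\langle S(C_n),W\rangle^2/\|S(C_n)\|_F^2=2n$, and exactly the same splitting of $\left[\begin{smallmatrix} I & -W\\ -W^\tr & I\end{smallmatrix}\right]$ into the nonnegative permutation block $\mathrm{diag}(P,Q)$ (the paper's negation maps $N_{\cF},N_{\cV}$) plus a remainder that must be shown positive semidefinite. Where you diverge is in certifying that remainder. The paper argues via a generalized Schur complement: it checks $I-N_{\cV}\succeq 0$ (as twice an orthogonal projection), verifies the range condition $\range(\hat{W}^\tr)\subseteq\range(I-N_{\cV})$, and then computes $\hat{W}\hat{W}^\tr=2^n(I-N_{\cF})$ entrywise to conclude. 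You instead split $\RR^{2n}$ and $\RR^{2^n}$ into the $\pm 1$ eigenspaces of the involutions, observe that $W$ annihilates the symmetric parts and maps antisymmetric to antisymmetric, reduce positive semidefiniteness to the operator-norm bound $\|W\|_{\mathrm{op}}\le 2$, and diagonalize $W$ explicitly via the Walsh characters $v\mapsto 2v_i-1$ (these do span the row space of $W$, so the $n$ singular values you find, all equal to $2$, are indeed all of them). The two verifications are computationally equivalent -- the paper's identity $\hat{W}\hat{W}^\tr=2^n(I-N_{\cF})$ is precisely the statement that all nonzero singular values of $\hat W$ equal $2^{n/2}$ with left singular vectors $e_F-e_{\bar F}$ -- but your version makes the singular vectors explicit and avoids the pseudo-inverse and range bookkeeping, at the cost of the slightly more delicate reduction of the quadratic form to its antisymmetric components (which you carry out correctly: $Wy_s=0$ kills one cross term and orthogonality of $Wy_{\mathrm a}$ to $x_{\mathrm s}$ kills the other).
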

\begin{proof}
The facets of the hypercube $C_n = [0,1]^n$ are given by the linear inequalities $\{x_k \geq 0\}, \; k=1,\dots,n$ and $\{x_k \leq 1\}, \; k=1,\dots,n$, and the vertices of $C_n$ are given by the set $\{0,1\}^n$ of binary words of length $n$. It is easy to see that the slack matrix of the hypercube is a 0/1 matrix: in fact, for a given facet $F$ and vertex $V$, the $(F,V)$'th entry of $S(C_n)$ is given by:
\[ S(C_n)_{F,V} = \begin{cases} 1 & \text{ if $V \notin F$}\\ 0 & \text{ if $V \in F$} \end{cases}. \]
Since the slack matrix of the hypercube $S(C_n)$ has $2n$ rows we clearly have 
\[ \left(\frac{\nu_+^{[0]}(S(C_n))}{\|S(C_n)\|_F}\right)^2 \leq \rank_+(S(C_n)) \leq 2n. \]
To show that we indeed have equality, we exhibit a particular feasible point $W$ of the semidefinite program \eqref{eq:nu+0} and we show that this $W$ satisfies $(\langle S(C_n), W \rangle / \|S(C_n)\|_F)^2 = 2n$. 

Let 
\begin{equation}
\label{eq:Woptcube}
 W = \frac{1}{\sqrt{2^{n-1}}} (2 S(C_n) - J).
\end{equation}
Observe that $W$ is the matrix obtained from $S(C_n)$ by changing the ones into $\frac{1}{\sqrt{2^{n-1}}}$ and zeros into $-\frac{1}{\sqrt{2^{n-1}}}$. For this $W$  we verify using a simple calculation that 
\[ \left(\frac{\langle S(C_n), W \rangle}{\|S(C_n)\|_F}\right)^2 = 2n. \]
It thus remains to prove that $W$ is indeed a feasible point of the semidefinite program \eqref{eq:nu+0} and that the matrix
\[ \begin{bmatrix} I & -W\\ -W^\tr & I\end{bmatrix} \in \RR^{(2n + 2^n)\times (2n+2^n)} \]
 can be written as the sum of a nonnegative matrix and a positive semidefinite one. This is the main part of the proof and for this we introduce some notations. Let $\cF$ be the set of facets of the hypercube, and $\cV=\{0,1\}^n$ be the set of vertices.  If $F \in \cF$ is a facet of the hypercube, we denote by $\bar{F}$ the opposite facet to $F$ (namely, if $F$ is given by $x_k \geq 0$, then $\bar{F}$ is the facet $x_k \leq 1$ and vice-versa). Similarly for a vertex $V \in \cV$ we denote by $\bar{V}$ the opposite vertex obtained by complementing the binary word $V$.
 Denote by $N_{\cF}:\RR^{\cF}\rightarrow \RR^{\cF}$ and $N_{\cV}:\RR^{\cV}\rightarrow \RR^{\cV}$ the ``negation'' maps, so that we have:
 \begin{equation} \label{eq:negF} \forall g \in \RR^{\cF},\; \forall F \in \cF,\; (N_{\cF} g)(F) = g(\bar{F}) \end{equation}
 \begin{equation} \label{eq:negV} \forall h \in \RR^{\cV},\; \forall V \in \cV,\; (N_{\cV} h)(V) = h(\bar{V}) \end{equation}
Note that in a suitable ordering of the facets and the vertices, the matrix representation of $N_{\cF}$ and $N_{\cV}$ take the following antidiagonal form ($N_{\cF}$ is of size $2n\times 2n$ and $N_{\cV}$ is of size $2^n \times 2^n$):
  \[ \begin{bmatrix}
0 &         &           & 1\\
  &         & \iddots   &  \\
  & \iddots &           &  \\
1 &         &           &  0
\end{bmatrix} \]
Consider now the following decomposition of the matrix $\begin{bmatrix} I & -W\\ -W^\tr & I\end{bmatrix}$:
 \begin{equation}
 \label{eq:decompositionSlackHypercube}
\begin{bmatrix} I & -W\\ -W^\tr & I \end{bmatrix} = \begin{bmatrix} N_{\cF} & 0\\ 0 & N_{\cV} \end{bmatrix} + \begin{bmatrix} I-N_{\cF} & -W\\ -W^\tr & I-N_{\cV} \end{bmatrix}
  \end{equation}
 Clearly the first matrix in the decomposition is nonnegative. The next lemma states that the second matrix is actually positive semidefinite: 
 
\begin{lemma}
Let $\mathcal F$ and $\mathcal V$ be respectively the set of facets and vertices of the hypercube $C_n=[0,1]^n$ and let $\hat{W} \in \RR^{\cF \times \cV}$ be the matrix:
\[ \hat{W}_{F,V} = \begin{cases} 1 & \text{ if $V \notin F$}\\ -1 & \text{ if $V \in F$} \end{cases}\;\; \forall F \in \cF,\; V \in \cV \]
Then the matrix
\begin{equation}
 \label{eq:psdmatrixhypercube}
 \begin{bmatrix} I-N_{\cF} & -\gamma \hat{W}\\ -\gamma\hat{W}^\tr & I-N_{\cV} \end{bmatrix}
\end{equation}
is positive semidefinite for $\gamma = 1/\sqrt{2^{n-1}}$ (where $N_{\cF}$ and $N_{\cV}$ are defined in \eqref{eq:negF} and \eqref{eq:negV}).
\end{lemma}

\begin{proof}
We use the Schur complement to show that the matrix \eqref{eq:psdmatrixhypercube} is positive semidefinite. In fact we show that
\begin{enumerate}
\item $I-N_{\cV} \succeq 0$
\item $\range(\hat{W}^\tr) \subseteq \range(I-N_{\cV})$, and 
\item $I-N_{\cF} - \gamma^2 \hat{W} (I-N_{\cV})^{-1} \hat{W}^\tr \succeq 0$.
\end{enumerate}
where $(I-N_{\cV})^{-1}$ denotes the pseudo-inverse of $I-N_{\cV}$.

Observe that for any $k \in \NN$, the $2k \times 2k$ matrix given by:
\[ I_{2k} -N_{2k} = \begin{bmatrix}
1  &    &     & -1\\
   &  \ddots   & \iddots   &  \\
   & \iddots & \ddots  &  \\
-1 &        &        &  1
\end{bmatrix} \]
is positive semidefinite: in fact one can see that $\frac{1}{2}(I_{2k}-N_{2k})$ is the orthogonal projection onto the subspace spanned by its columns (i.e., the subspace of dimension $k$ spanned by $\{e_i - e_{2k-i} \; : \; i=1,\dots,n\}$ where $e_i$ is the $i$'th unit vector). Hence this shows that $I-N_{\cV}$ is positive semidefinite, and it also shows that $(I-N_{\cV})^{-1} = \frac{1}{4} (I-N_{\cV})$.

Now we show that $\range(\hat{W}^\tr) \subseteq \range(I-N_{\cV})$. For any $F \in \cF$, the $F$'th column of $\hat{W}^\tr$ satisfies $(\hat{W}^\tr)_{V,F} = -(\hat{W}^\tr)_{\bar{V},F}$ for any $V \in \cV$, and thus $\range(\hat{W}^\tr) \subseteq \linspan(e_{V} - e_{\bar{V}}, \; : \; V \in \cV) = \range(I-N_{\cV})$.

It thus remains to show that
\[ I-N_{\cF} - \gamma^2 \hat{W} (I-N_{\cV})^{-1} \hat{W}^\tr \succeq 0 \]
First note that since $\frac{1}{2} (I-N_{\cV})$ is an orthogonal projection and that $\range(\hat{W}^\tr) \subseteq \range(I-N_{\cV})$, we have $(I-N_{\cV})^{-1} \hat{W}^\tr = \frac{1}{2} \hat{W}^\tr$. Thus we now have to show that
\[ I-N_{\cF} - \frac{\gamma^2}{2} \hat{W} \hat{W}^\tr \succeq 0. \]
The main observation here is that the matrix $\hat{W}\hat{W}^\tr$ is actually equal to $2^{n} (I-N_{\cF})$. For any $F,G \in \cF$, we have:
\[ (\hat{W}\hat{W}^\tr)_{F,G} = \sum_{a \in \cV}\hat{W}_{F,a} \hat{W}_{G,a} = \begin{cases} 2^n & \text{ if } F=G\\ -2^n & \text{ if } F=\bar{G}\\ 0 & \text{ else} \end{cases} \]
First it is clear that if $F=G$, then $(\hat{W}\hat{W}^\tr)_{F,G} = 2^n$. Also if $F=\bar{G}$ then $(\hat{W}\hat{W}^\tr)_{F,G} = -2^n$ since if $F=\bar{G}$ then $a \in F \Leftrightarrow a \notin G$ hence $\hat{W}_{F,a} \hat{W}_{G,a} = -1$ for all $a \in \cV$. In the case that $F \neq G$ and $F \neq \bar{G}$, it is easy to verify by simple counting that $\sum_{a \in \cV}\hat{W}_{F,a} \hat{W}_{G,a} = 0$.

Hence we have $I-N_{\cF} - \gamma^2 \hat{W} (I-N_{\cV})^{-1} \hat{W}^\tr = I-N_{\cF} - \frac{\gamma^2}{2} \hat{W}\hat{W}^\tr = (1 - \frac{\gamma^2}{2}2^n) (I-N_{\cF})$ which is positive semidefinite for $\gamma = 1/\sqrt{2^{n-1}}$.

\end{proof}
Using this lemma, Equation~\eqref{eq:decompositionSlackHypercube} shows that the matrix $W$ is feasible for the semidefinite program \eqref{eq:nu+0}, and thus that $\nu_+^{[0]}(S(C_n)) \geq \langle S(C_n), W \rangle = \sqrt{2^{n-1}} \cdot 2n$. Hence since $\|S(C_n)\|_F = \sqrt{2^{n-1} \cdot 2n}$ we get that 
\[ \rank_+(S(C_n)) \geq \left(\frac{\nu_+^{[0]}(S(C_n))}{\|S(C_n)\|_F}\right)^2 \geq  \left(\frac{\sqrt{2^{n-1}} \cdot 2n}{\sqrt{2^{n-1} \cdot 2n}}\right)^2 = 2n \]
which completes the proof.
\end{proof}

\bibliographystyle{alpha}
\bibliography{nonnegative_rank}

\end{document}